\documentclass[a4paper,12pt,english,reqno]{amsart}
\usepackage{graphicx, varioref, amscd ,color, bm, stmaryrd, mathrsfs, dsfont, esint}
\usepackage[colorlinks=false]{hyperref}

\usepackage{tikz} 
\usepackage{pgflibraryarrows} 
\usepackage{placeins} 

\newcommand{\R}{\mathbb{R}}

\theoremstyle{plain}

\theoremstyle{plain}
\newtheorem{theorem}{Theorem}[section]
\newtheorem{lemma}{Lemma}[section]

\theoremstyle{definition}

\newtheorem{remark}[lemma]{Remark}

\numberwithin{equation}{section}
\allowdisplaybreaks[1]

\title[A Logarithmically damped wave system]
{ The Global Existence and Energy Decay of a Partially Logarithmically Damped Wave System}

\author[S. A. Messaoudi]{S. A. Messaoudi}
\address[Salim A. Messaoudi]
{\newline Department of Mathematics,
\newline College of Sciences,
\newline University of Sharjah,
\newline P.O. Box 27272,
\newline Sharjah, UAE}
\email[]{smessaoudi@sharjah.ac.ae}

\author[S. Ulusoy]{S. Ulusoy}
\address[S\"uleyman Ulusoy]
{\newline Department of Mathematics and Physics,
\newline College of Arts and Sciences,
\newline American University of Ras Al Khaimah,
\newline P.O. Box 10021,
\newline Ras Al Khaimah, UAE}
\email[]{suleyman.ulusoy@aurak.ac.ae}

\date{\today}

\subjclass[2010]{Primary: 35L05, 35B40; Secondary: 35A01, 35L70}

\keywords{Wave system, coupled equations, logarithmic damping, global existence, energy decay, stability analysis, Faedo-Galerkin method.}

\begin{document}

\begin{abstract}
We investigate the global existence and energy decay properties of a weakly coupled evolution system consisting of two wave equations in a bounded domain. The system exhibits a unique structural asymmetry: a nonlinear logarithmic damping term acts exclusively as a friction-like feedback mechanism on the first wave equation, while the second equation remains completely undamped and relies solely on linear cross-coupling to transfer and dissipate its energy. 

We establish two primary objectives. First, by utilizing the Faedo-Galerkin approximation scheme in conjunction with the structural monotonicity properties of the non-Lipschitz logarithmic nonlinearity, we prove a rigorous global existence result for strong solutions. Second, we provide a rigorous mathematical proof demonstrating the stability and asymptotic energy decay of the solutions under suitable constraints on the cross-coupling parameter. These results provide new insights into the stabilization mechanisms of coupled hyperbolic systems under degenerate nonlinear damping.
\end{abstract}

\maketitle
\allowdisplaybreaks


\section{Introdcution}

In recent decades, the study of the well-posedness and asymptotic behavior of solutions to evolution equations has attracted a great deal of attention, due to their fundamental role in understanding stability and long-term dynamics. In particular, coupled systems of wave equations modeling various physical mechanisms, such as thermoelasticity, electromagnetic interaction, or mechanical vibration, have been extensively investigated. The nature of the dissipation (whether it is linear, nonlinear, localized, or memory-driven) and the number of controls play a crucial role in determining the rate of energy decay.

In this regard, we mention the work of Almeida and M. L. Santos~\cite{Almeida2011} who considered, in a bounded domain, the following partially controlled system:
\begin{align*}
    u_{tt} - \mu \Delta u + \int_{0}^{\infty} g(s)\Delta u(t - s)\,ds + \alpha v &= 0, \quad \text{in } \Omega \times (0, \infty), \\
    v_{tt} - \mu \Delta v + \alpha u &= 0, \quad \text{in } \Omega \times (0, \infty),
\end{align*}
together with initial and Dirichlet-boundary conditions, where $\alpha$ is a positive and small parameter and the kernel $g$ decays exponentially. They proved that the energy decays polynomially but does not decay exponentially. We also mention the important work of Jin et al.~\cite{Jin2014}, who studied a coupled system of two abstract evolution equations with finite memory. Precisely, they looked into the following system:
\begin{align*}
    u_{tt} + Au + \int_{0}^{t} g(s)Au(t - s)\,ds + \alpha u + \beta Bv &= f(u), \quad t > 0, \\
    v_{tt} + Av + \beta Bu &= 0, \quad t > 0,
\end{align*}
where $A$ and $B$ are operators, $\alpha \geq 0$, $\beta > 0$ are two constants, and $f$ is a nonlinear source. Under appropriate assumptions on $g$, the operators $A$, $B$, and $f$, they proved polynomial stability with a decay rate of order $1/t$. Guesmia et al.~\cite{Guesmia2024} studied an abstract coupled system involving one viscoelastic component interacting with a purely elastic equation. They established some general decay rates for the total energy of the system and showed that asymptotic behavior strongly depends on the decay behavior of the relaxation function and the coupling parameter.

Recently, Fadel et al.~\cite{Fadel2026} considered the following system:
\begin{align*}
    u_{tt} - \mu \Delta u + \int_{0}^{t} g(s)\Delta u(t - s)\,ds + \alpha v &= 0, \quad \text{in } \Omega \times (0, \infty), \\
    v_{tt} - \mu \Delta v + \alpha u &= 0, \quad \text{in } \Omega \times (0, \infty),
\end{align*}
where the relaxation function satisfies, for an increasing and convex function $G$ near the origin and a nonincreasing function $\xi$, the relation:
\[
g'(t) \leq -\xi(t)G(g(t)).
\]
They established, in details, an existence and uniqueness result, then proved a general and explicit result for energy decay and provided some numerical illustration to validate the theoretical findings. Several extensions and related contributions in this direction can be found in~\cite{Akil2021, Guesmia2015, Oliveira2023}.

Besides the viscoelastic systems, purely elastic coupled wave models have also been investigated. For instance, Lobato et al.~\cite{Lobato2014} looked into the weakly coupled system:
\begin{align*}
    u_{tt} - \mu \Delta u + u_t + \alpha v &= 0, \quad \text{in } \Omega \times (0, \infty), \\
    v_{tt} - \mu \Delta v + \alpha u &= 0, \quad \text{in } \Omega \times (0, \infty),
\end{align*}
where $\Omega$ is a bounded domain and $\alpha$ a positive constant. In their work, the authors showed that the energy decays polynomially, with the rate being optimal. They also provided numerical illustrations in the one-dimensional case. Similar results on stability and decay rates had earlier been established for systems with boundary damping by Komornik et al.~\cite{Komornik1997} and Aassila~\cite{Aassila1999, Aassila2001}.

Contrary to the huge literature on classical damping mechanisms, the treatment of logarithmic-type damping remains comparatively small, particularly within the setting of coupled systems. Typical examples of such dampings, mentioned in~\cite{Li2025}, include
\[
\text{sgn}(u_t)\ln(1 + |u_t|), \quad |u_t|^q u_t \ln(1 + u_t^2), \quad q \geq 2,
\]
which all lack linearity, homogeneity, and coercivity. This, of course, creates substantial analytical difficulties for establishing both well-posedness and stability.

Motivated by the work of Li et al.~\cite{Li2025} on the wave equation, many  works extended these ideas and results to other problems, such as Timoshenko systems, swelling porous media and plate problems. We refer to~\cite{AlMahdi2026_swelling, AlMahdi2026_piezo, Messaoudi2026_timo, Messaoudi2025_timo, Messaoudi2026_weak, Messaoudi2026_visco} for more information.

Inspired by the aforementioned works, particularly the recent developments in logarithmic dampings and weakly coupled multi-component systems, we consider the following system:
 
\begin{equation}\label{a1}
\left\{
\begin{split}
u_{tt} - \Delta u &+ \alpha u_t \ln(1+u_t^2) + \beta v = 0, \quad (x, t) \in \Omega \times (0, \infty)\\
v_{tt} - \Delta v &+ \beta u = 0, \quad (x, t) \in \Omega \times (0, \infty)\\
u(x, t) &= 0 = v(x,t), \quad (x, t) \in \partial \Omega \times (0, T)\\
u(0,x) &= u_0(x), u_t(x,0) = u_1(x), \quad x \in \Omega\\
 v(x, 0) &= v_0(x), v_t(x,0) = v_1(x), \quad x \in \Omega,
\end{split}
\right.
\end{equation}
where $\Omega$ is a bounded domain of $\R^d$ $(d \geq 1) $ with a smooth boundary $\partial \Omega$, $\alpha, \beta > 0, $ are constants and we assume that the initial data belong to suitable  spaces. \textbf{To the best of our knowledge}, this is the first work on systems of wave equations associated with only one logarithmic damping.\\

In this paper, we want to achieve two goals. Namely, 

\begin{enumerate}

\item Prove a rigorous global existence result for solutions to the problem \eqref{a1}, using the Faedo-Galerkin method for which monotonicity of the nonlinear term plays a role. To the best of our knowledge, no such existence result is previously available for this problem.\\

\item Provide a rigorous proof of the stability of solutions. 

\end{enumerate}

The rest of the paper is organized as follows. In Section 2, some preliminary material that is essential for the analysis is presented. In Section 3, the existence result is presented and proved. Section 4 contains the discussion of the stability of the energy of the solution.

 \section{Preliminaries}
 
 Here, we introduce the mathematical tools and preliminary results used in the proofs of the main theorems. We consider the standard Lebesgue space $L^2(\Omega) $ and the Sobolev spaces $H_0^1(\Omega)$ and $H^2(\Omega)$, equipped with usual norms and inner products. Throughout, $c$ and $C$ denote generic positive constants. \\

\begin{remark}
  Throughout the paper we assume that the constant $\beta$ is such that
  \begin{equation}\label{eq:cond-beta}
0 < \beta < \frac{1}{C_p},  
  \end{equation}
where  $C_p$ is the Poincar\'{e} constant.
  
 \end{remark}

 \begin{remark}  (See \cite{Li-Zhang-Zhang}.) The following inequalities are used for the existence and the decay results.\\
 
 \begin{itemize}
 
 \item For $|s| \leq 1, m \geq 1$, and $0  < n \leq 2$, there exist positive constants $C_p$ and $C_q$ such that 
 
 \begin{equation}\label{b1}
 C_p|s|^{1+m} \leq |s| \ln(1+|s|) \leq C_q|s|^n.
 \end{equation}
 
 \item For $|s| \geq 1$, $0 < k < 1$, and $\rho > 0$, there exist positive constants $C_k$ and $C_p$ such that 
 
 \begin{equation}\label{b2}
 C_k|s|^{k} \leq |s| \ln(1+|s|) \leq C_{\rho}|s|^{1+\rho}, \quad \text{for} \, \, |s| \geq 1, \rho > 0, 0 < k \leq 1.
 \end{equation}
 
 \end{itemize}
 \end{remark}

 \section{Existence of Weak Solutions}
 
 In this section, we examine the well-posedness of the problem \eqref{a1}. This is done by combining some energy estimates and the Faedo-Galerkin method.
 
 \begin{theorem}\label{thm:existence}
 Assume \eqref{eq:cond-beta} holds and that  $u_0, v_0 \in H^2(\Omega) \cap H_0^1(\Omega)$  and $u_1, v_1 \in H_0^1(\Omega)$. Then, the problem \eqref{a1} has a unique weak solution $(u,v)$ satisfying for every $T > 0$,
 $$u, v \in L^{\infty}([0,T), H^2(\Omega) \cap H_0^1(\Omega)),$$
 $$ u_t, v_t \in L^2((0, T), H_0^1(\Omega)) \cap H_0^1((0, T) \times \Omega),$$
 $$ u_{tt}, v_{tt} \in L^{\infty}((0, T), L^2(\Omega)).$$
 \end{theorem}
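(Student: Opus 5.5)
We will follow the standard Faedo--Galerkin scheme. Let $\{w_j\}_{j\ge1}$ be the eigenfunctions of $-\Delta$ with Dirichlet conditions, orthonormal in $L^2(\Omega)$ and orthogonal in $H_0^1(\Omega)$ and in $H^2(\Omega)\cap H_0^1(\Omega)$. Set $W_m=\mathrm{span}\{w_1,\dots,w_m\}$ and look for
\[
u^m(t)=\sum_{j=1}^m a_{jm}(t)w_j,\qquad v^m(t)=\sum_{j=1}^m b_{jm}(t)w_j .
\]
These solve the projected system
\begin{align*}
(u^m_{tt},w)+(\nabla u^m,\nabla w)+\alpha\big(u^m_t\ln(1+(u^m_t)^2),w\big)+\beta(v^m,w)&=0,\\
(v^m_{tt},w)+(\nabla v^m,\nabla w)+\beta(u^m,w)&=0,
\end{align*}
for all $w\in W_m$. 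The initial data $u^m(0),u^m_t(0),v^m(0),v^m_t(0)$ are the projections of $u_0,u_1,v_0,v_1$; these converge in $H^2\cap H_0^1$ and in $H_0^1$, respectively. Since $s\mapsto s\ln(1+s^2)$ is $C^1$, the resulting ODE system has a local solution on some $[0,t_m)$. The a priori estimates below extend it to $[0,T]$.

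\textbf{First estimate.} Taking $w=u^m_t$ and $w=v^m_t$ and adding gives $E_m'(t)=-\alpha\int_\Omega (u^m_t)^2\ln(1+(u^m_t)^2)\le 0$, where
\[
E_m=\tfrac12\big(\|u^m_t\|^2+\|v^m_t\|^2+\|\nabla u^m\|^2+\|\nabla v^m\|^2\big)+\beta\int_\Omega u^m v^m .
\]
By Poincar\'e and \eqref{eq:cond-beta}, $\big|\beta\int u^mv^m\big|\le \tfrac{\beta C_p}{2}(\|\nabla u^m\|^2+\|\nabla v^m\|^2)$. Hence $E_m$ controls the full energy with constant $1-\beta C_p>0$. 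This yields uniform bounds on $u^m,v^m$ in $L^\infty(0,T;H_0^1)$, on $u^m_t,v^m_t$ in $L^\infty(0,T;L^2)$, and on $\int_0^T\!\!\int_\Omega (u^m_t)^2\ln(1+(u^m_t)^2)$.

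\textbf{Second estimate.} We differentiate the Galerkin equations in $t$ and take $w=u^m_{tt}$, $w=v^m_{tt}$. The key point is that $f(s)=s\ln(1+s^2)$ has
\[
f'(s)=\ln(1+s^2)+\frac{2s^2}{1+s^2}\ge0,
\]
so the damping contributes $\alpha\int f'(u^m_t)(u^m_{tt})^2\ge0$ with the good sign. The coupling terms $\beta\int (v^m_t u^m_{tt}+u^m_tv^m_{tt})$ are controlled by $\|u^m_t\|^2+\|v^m_t\|^2+\|u^m_{tt}\|^2+\|v^m_{tt}\|^2$, so Gronwall's lemma closes the estimate. This bounds $u^m_{tt},v^m_{tt}$ in $L^\infty(0,T;L^2)$ and $u^m_t,v^m_t$ in $L^\infty(0,T;H_0^1)$, provided $\|u^m_{tt}(0)\|$ is bounded.

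\textbf{Main obstacle.} Bounding $\|u^m_{tt}(0)\|$ requires
\[
\|u^m_{tt}(0)\|\le \|\Delta u_0\|+\alpha\|u_1\ln(1+u_1^2)\|+\beta\|v_0\| .
\]
We need $f(u_1)\in L^2$ for $u_1\in H_0^1$. Using \eqref{b1}--\eqref{b2} with small $\rho$, $|f(u_1)|\le C(|u_1|+|u_1|^{1+2\rho})$, and $H_0^1\hookrightarrow L^{2+4\rho}$ holds for $\rho$ small in any dimension $d$ where the embedding is available; for large $d$ this is a genuine restriction we would have to state or circumvent.

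Next we obtain the $H^2$ bound. Writing $-\Delta u^m=-u^m_{tt}-\alpha P_m f(u^m_t)-\beta v^m$ and testing with $-\Delta u^m$ (allowed by the choice of basis), we need $f(u^m_t)$ bounded in $L^\infty(0,T;L^2)$. This follows from the $H_0^1$ bound on $u^m_t$ by the same embedding, and similarly for $v^m$.

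\textbf{Passage to the limit.} By Aubin--Lions we get $u^m_t\to u_t$ strongly in $L^2(0,T;L^2)$ and a.e., with weak-$*$ limits in the stated spaces. The a.e. convergence together with the uniform $L^2$ bound on $f(u^m_t)$ gives $f(u^m_t)\rightharpoonup f(u_t)$ (Lions' lemma). Alternatively, monotonicity of $f$ and the Minty trick identify the weak limit $\chi=f(u_t)$. The linear terms pass to the limit trivially.

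\textbf{Uniqueness.} For two solutions, the difference $(\bar u,\bar v)$ satisfies the linear system plus $\alpha(f(u_t)-f(\tilde u_t))$. Testing with $\bar u_t,\bar v_t$, monotonicity makes the damping term nonnegative, and the coupled energy (coercive by \eqref{eq:cond-beta}) is nonincreasing and starts at $0$. Hence $\bar u=\bar v=0$.
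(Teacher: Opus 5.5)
Your argument is correct, but your higher-order estimate takes a different route from the paper's. The paper never differentiates in time. It replaces $\phi_j$ by $-\Delta\phi_j$ and multiplies by $a_j'$, i.e.\ it tests with $-\Delta u^N_t$ and $-\Delta v^N_t$:

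\begin{itemize}
\item After integrating by parts (using $f(0)=0$), the damping gives $\alpha\int_\Omega f'(u^N_t)|\nabla u^N_t|^2\,dx\ge 0$.
\item The coupling is the exact derivative $\beta\frac{d}{dt}\int_\Omega \nabla u^N\cdot\nabla v^N\,dx$. It is absorbed via \eqref{eq:cond-beta} and $\|\nabla w\|_2^2\le C_p\|\Delta w\|_2^2$.
\item This bounds $\|\nabla u^N_t\|_2$ and $\|\Delta u^N\|_2$ (and likewise for $v^N$) directly by $\|\nabla u_1\|_2$ and $\|\Delta u_0\|_2$. The bound is uniform in $t$, needs no Gronwall, and needs no control of $u^N_{tt}(0)$.
\item $u^N_{tt}$ is then read off from the equation, using $\|f(u^N_t)\|_2\le C(1+\|\nabla u^N_t\|_2^{1+\rho})$.
\end{itemize}

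You reverse the order: the time-differentiated energy comes first, which costs you the estimate of $\|u^m_{tt}(0)\|_2$, and the $H^2$ bound comes afterwards from the equation. What your route buys is that it gives, at the Galerkin level, exactly the second-order energy law \eqref{b6} for $E_2$ that Section 4 relies on. The paper's spatial estimate does not supply this.

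Your Gronwall step is unnecessary. The coupling satisfies $\beta\int_\Omega(v^m_tu^m_{tt}+u^m_tv^m_{tt})\,dx=\beta\frac{d}{dt}\int_\Omega u^m_tv^m_t\,dx$, which is how $E_2$ in \eqref{b5} is built, and coercivity under \eqref{eq:cond-beta} then gives a $T$-independent bound. Your identification of the limit of $f(u^m_t)$, via Aubin--Lions plus a.e.\ convergence and Lions' lemma (or Minty), is more complete than the paper's Step 3, which simply asserts it.

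Your ``main obstacle'' is not an obstacle. For $|s|\ge1$ and every $\rho>0$ one has $\ln(1+s^2)\le C_\rho|s|^{2\rho}$. So when $d\ge 3$ you may take $0<\rho\le 1/(d-2)$, which gives $2+4\rho\le 2d/(d-2)$ and hence $H^1_0(\Omega)\hookrightarrow L^{2+4\rho}(\Omega)$. This works in every dimension, and it is what the paper does in \eqref{ss6}; no restriction on $d$ is needed.

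One small slip: the bound on $u^m_{tt}(0)$ must involve $P_m f(u^m_1)$, not $f(u_1)$, since $f$ does not commute with the projection. It is still uniformly bounded in $m$, because $\|\nabla u^m_1\|_2\le\|\nabla u_1\|_2$.
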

 
 \begin{proof}
 
 ~\\
 
 \noindent \textbf{Step 1.} \emph{Faedo-Galerkin approximation.}
 We take $\{\phi_j\}_{j=1}^\infty$ to be the eigenfunctions of the Laplacian operator subject to Dirichlet boundary conditions, satisfying $-\Delta \phi_j = \lambda_j \phi_j$. It is well-known that $\{\phi_j\}_{j=1}^\infty$ forms an orthonormal basis for $H_0^1(\Omega), \, H^2(\Omega) \cap H_0^1(\Omega)$, and $L^2(\Omega)$. Let $V_N$ be the vector space spanned by the first $N$ functions $\phi_1, \phi_2, \cdots, \phi_N$. We will seek for an approximate solution $(u^N, v^N)$ of the form
 $$ u^N(x,t) = \sum_{j=1}^N a_j(t)\phi_j(x), \quad v^N(x,t) = \sum_{j=1}^N a_j(t)\phi_j(x), \quad t \in (0, T),$$
 which satisfies, for $1 \leq j \leq N,$

 \begin{equation}\label{eq-w1}
 \left\{
 \begin{split}
 \int_{\Omega} u_{tt}^N \phi_j \, dx &+ \int_{\Omega} \nabla u^N \cdot \nabla \phi_j \, dx + \alpha \int_{\Omega} u_t^N \ln(1+(u_t^N)^2) \phi_j \, dx \\
 & \qquad \qquad + \beta \int_{\Omega} v^N \phi_j \, dx = 0, \\
 \int_{\Omega} v_{tt}^N \phi_j \, dx &+ \int_{\Omega} \nabla v^N \cdot \nabla \phi_j \, dx  + \beta \int_{\Omega} u^N \phi_j \, dx = 0, 
 \end{split}
 \right. 
 \end{equation}
with 
 \begin{equation}\label{eq-w2}
 \left\{
 \begin{split}
 u^N(0) &= u_0^N = \sum_{j=1}^N <u_0, \phi_j>\phi_j \to u_0,\\
 v^N(0) &= v_0^N = \sum_{j=1}^N <v_0, \phi_j>\phi_j \to u_0, \,\, \text{in} \,\, H^2(\Omega) \cap H_0^1(\Omega), \\
 \end{split}
 \right. 
 \end{equation}
 
  \begin{equation}\label{eq-w3}
 \left\{
 \begin{split}
 u_t^N(0) &= u_1^N = \sum_{j=1}^N <u_1, \phi_j>\phi_j \to u_0,\\
 v_t^N(0) &= v_1^N = \sum_{j=1}^N <v_1, \phi_j>\phi_j \to u_0, \,\, \text{in} \,\, H_0^1(\Omega). \\
 \end{split}
 \right. 
 \end{equation}
 Standard ODE theory guarantees the existence of a unique local solution $(u^N, v^N)$ of \eqref{eq-w1}-\eqref{eq-w3} defined on $[0, t_N), \, \, 0 < t_N \leq T$. In the next steps we obtain a priori estimates that will allow us to extend the solution to the entire interval. \\

  \noindent \textbf{Step 2.} \\
   \noindent \textbf{A priori estimate 1.}
 Multiplying the first equation in \eqref{eq-w1} by $a_j'(t)$ and summing over $j=1$ to $N$, one gets for all $0<t \leq t_N$,
 \begin{equation}\label{eq-w4}
 \begin{split}
 \frac{d}{dt} \frac{1}{2} \left( ||u_t^N(t)||_2^2 + ||\nabla u^N(t)||_2^2 \right) &+ \alpha \int_{\Omega} (u_t^N)^2 \ln(1+(u_t^N)^2) \, dx \\
 & + \beta \int_{\Omega} v^N(t) u_t^N(t) \, dx = 0.
 \end{split}
 \end{equation}
  Multiplying the second equation in \eqref{eq-w1} by $b_j'(t)$ and summing over $j=1$ to $N$, one gets for all $0<t \leq t_N$,
  \begin{equation}\label{eq-w5}
 \frac{d}{dt} \frac{1}{2} \left( ||v_t^N(t)||_2^2 + ||\nabla v^N(t)||_2^2 \right) + \beta \int_{\Omega} u^N(t) v_t^N(t) \, dx = 0.
 \end{equation}
 Combining \eqref{eq-w4} and \eqref{eq-w5} implies 
 \begin{equation}\label{eq-w6}
 \begin{split}
 &\frac{d}{dt} \frac{1}{2}\left( ||u_t^N||_2^2+||v_t^N||_2^2 +  ||\nabla u^N||_2^2  +  ||\nabla v^N||_2^2 + 2\beta \int_{\Omega} u^N(t) v^N(t) \, dx\right) \\
 & \qquad \qquad \qquad \qquad  = -\alpha \int_{\Omega} (u_t^N)^2 \ln(1+(u_t^N)^2) \, dx,
 \end{split}
 \end{equation}
which yields 
\begin{equation}\label{eq-w7}
\begin{split}
 &\frac{d}{dt} \frac{1}{2}\left( ||u_t^N||_2^2+||v_t^N||_2^2 +  ||\nabla u^N||_2^2  +  ||\nabla v^N||_2^2 + 2\beta \int_{\Omega} u^N(t) v^N(t) \, dx\right)\\
& \qquad \qquad \qquad \qquad   \leq 0.
\end{split}
 \end{equation}
 Integrating \eqref{eq-w7} over $(0, t)$ gives 
 \begin{equation}\label{eq-w8}
 E_{1,N}(t) \leq E_{1,N}(0),
 \end{equation}
 where 
$$ E_{1,N}(t) =||u_t^N||_2^2+||v_t^N||_2^2+||\nabla u^N||_2^2+||\nabla v^N||_2^2+2\beta \int_{\Omega} u^N(t)v^N(t) \, dx,$$
$$  E_{1,N}(0) =||u_1^N||_2^2+||v_1^N||_2^2+||\nabla u_0^N||_2^2+||\nabla v_0^N||_2^2+2\beta \int_{\Omega} u_0^N v_0^N \, dx.$$
By Young's and Poincar\'{e}'s inequalities, for $\varepsilon_1>0$, we have
\begin{equation}\label{eq-w9}
 2\int_{\Omega} u^N(t)v^N(t) \, dx \geq -\varepsilon_1C_p||\nabla u^N(t)||_2^2-\frac{C_p}{\varepsilon_1}||\nabla v^N(t)||_2^2
 \end{equation}
 and 
\begin{equation}\label{eq-w10}
 2\beta \int_{\Omega} u_0^N(t)v_0^N(t) \, dx \leq \beta C_p||\nabla u_0^N||_2^2+\beta C_p||\nabla v_0^N||_2^2.
 \end{equation} 
 This implies that 
 \begin{equation}\label{eq-w11}
 E_{1,N}(0) \leq ||u_1^N||_2^2+||v_1^N||_2^2+(1+\beta C_p)||\nabla u_0^N||_2^2 +(1+\beta C_p)||\nabla v_0^N||_2^2.
 \end{equation}
The sequences $(u_0^N)_{N \in \mathbb{N}}, (u_1^N)_{N \in \mathbb{N}}, (v_0^N)_{N \in \mathbb{N}}, (v_1^N)_{N \in \mathbb{N}}$  converge, so there is  a positive constant $C_1$, independent of $N$, such that
  \begin{equation}\label{eq-w11}
 E_{1,N}(0) \leq C_1.
 \end{equation}
 On the other hand, we also have 
   \begin{equation}\label{eq-w12}
   \begin{split}
 E_{1,N}(t) &\geq ||u_t^N||_2^2+ ||v_t^N||_2^2+\left[1-\beta \varepsilon_1C_p\right]||\nabla u^N||_2^2\\
& \qquad \qquad  +\left[1-\frac{\beta C_p}{\varepsilon_1}\right] ||\nabla v^N||_2^2.
 \end{split}
\end{equation}
We now choose $\varepsilon_1$ such that $\beta C_p < \varepsilon_1 < \frac{1}{\beta C_p}$. Hence, there exists a constant $C > 0$, such that 
\begin{equation}\label{eq-w13}
 E_{1,N}(t) \geq C \left[ ||u_t^N||_2^2+||\nabla u^N||_2^2 +||v_t^N||_2^2 +||\nabla v^N||_2^2  \right].
 \end{equation}
 $E_{1,N}(t) \leq E_{1,N}(0)$ and $E_{1,N}(0) \leq C_1$ imply that
 \begin{equation}\label{eq-w14}
 ||u_t^N(t)||_2^2+||\nabla u^N(t)||_2^2 +||v_t^N(t)||_2^2 +||\nabla v^N(t)||_2^2 \leq \frac{C_1}{C}, \quad \forall t \leq t_N \leq T,
 \end{equation}
 where $\frac{C_1}{C}$ is a constant  independent of $t$ and $N$.\\
 
 \noindent \textbf{A priori estimate 2.} In the first equation of \eqref{eq-w1}, we replace $\phi_j$ by $-\Delta \phi_j$ and then multiply it by $a_j'(t)$ and in the second equation of \eqref{eq-w1}, we replace $\phi_j$ by $-\Delta \phi_j$ and then multiply it by $b_j'(t)$. By summming the resulting equations from $j=1$ to $j=N$, we arrive at
 \begin{equation*}
 \begin{split}
 \frac{d}{dt}\frac{1}{2} \Bigg[ ||\nabla u_t^N||_2^2+||\nabla v_t^N||_2^2 &+ ||\Delta u^N||_2^2+||\Delta v^N||_2^2 
 \\ 
 \qquad &+2\beta \int_{\Omega}\nabla u^N \cdot \nabla v^N \, dx \Bigg] \leq 0.
 \end{split}
 \end{equation*}
 
 Applying Young's and Poincar\'{e}'s inequalities, we have, similar to \eqref{eq-w14},
 
\begin{equation}\label{eq-r4}  
  ||\nabla u_t^N||_2^2 + ||\nabla v_t^N||_2^2 + ||\Delta u^N||_2^2 + ||\Delta v^N||_2^2 \leq \frac{K_1}{C}, \quad \forall t \leq t_N \leq T,
\end{equation}
where $\frac{K_1}{C}$  is a constant independent of $t$ and $N$.\\

\noindent \textbf{A priori estimate 3.} Multiplying the first equation in \eqref{eq-w1} by $a_j''(t)$ and summing over $j=1$ to $N$, one gets for all $0<t \leq t_N$,
 \begin{equation}\label{ss1}
 \begin{split}
  \int_{\Omega} \, |u_{tt}^N| \, dx &= \int_{\Omega} \Delta u^N u_{tt}^N \, dx\\ 
  &\quad -\beta  \int_{\Omega} v^N u_{tt}^N \, dx \\
 & \qquad - \alpha \int u_t^N u_{tt}^N \ln(1+(u_t^N)^2) \, dx.
 \end{split}
 \end{equation}
For the first term on the right side of \eqref{ss1}, one has, by Young's inequality,
 \begin{equation}\label{ss2}
 \int_{\Omega} \Delta u^N u_{tt}^N \, dx \leq \varepsilon \int_{\Omega} |u_{tt}^N|^2 \, dx + \frac{1}{4\varepsilon} \int_{\Omega} |\Delta u^N|^2 \, dx.
 \end{equation}
 
 For the second term on the right side of \eqref{ss1}, one has, by Young's inequality,
 \begin{equation}\label{ss3}
 -\beta \int_{\Omega} v^N u_{tt}^N \, dx \leq \varepsilon \int_{\Omega} |u_{tt}^N|^2 \, dx + \frac{\beta^2}{4\varepsilon} \int_{\Omega} |\Delta u^N|^2 \, dx.
 \end{equation}
 
  For the last term on the right side of \eqref{ss1}, one has, by Young's inequality,
 \begin{equation}\label{ss4}
 \begin{split}
& -\alpha \int_{\Omega} u_t^N u_{tt}^N \ln(1+(u_t^N)^2) \, dx \\
& \qquad \leq \varepsilon \int_{\Omega} |u_{tt}^N|^2 \, dx + \frac{\alpha^2}{4\varepsilon} \int_{\Omega} (u_t^N)^2 \ln^2(1+(u_t^N)^2) \, dx.
 \end{split}
 \end{equation}
We now deal with the last term in \eqref{ss4}. Define the following partition of $\Omega$.
$$\Omega^+ := \{x \in \Omega: |u_t(x)|> 1\} \quad \text{and} \quad \Omega^- := \{x \in \Omega: |u_t(x)| \leq 1\}.$$
 
 On $\Omega^-$, one has
\begin{equation}\label{ss5} 
 \int_{\Omega^-} (u_t^N)^2 \ln^2(1+(u_t^N)^2) \, dx \leq |\Omega| \ln^2(2) < \infty.
\end{equation} 
 
By \eqref{b1} with $s = (u_t^N)^2$ and choosing $\rho > 0$ such that $2(1+\rho) \leq \frac{2d}{d-2}$ if $d \geq 3$ and $\rho > 0$ if $d= 1, 2$ and using the Sobolev embedding theorem and \eqref{eq-r4}, one obtains
 \begin{equation}\label{ss6} 
 \begin{split}
 \int_{\Omega^+} (u_t^N)^2 \ln^2(1+(u_t^N)^2) \, dx &\leq   c \int_{\Omega^+} |u_t^N|^{2(1+\rho)} \, dx \\
 &\qquad  \leq c ||\nabla u_t^N||_2^{\rho + 1} < \infty.
\end{split}
\end{equation} 
 
Combining \eqref{ss5} and \eqref{ss6}, we deduce that 
$$ \int_{\Omega} (u_t^N)^2 \ln(1+(u_t^N)^2) \, dx < \infty.$$  
 
Taking $\varepsilon$ small enough in \eqref{ss1}-\eqref{ss4},  we deduce that

\begin{equation}\label{ss7}
 ||u_{tt}^N||_2^2 \leq L_3.
\end{equation} 
 Repeating similar calculations with the second equation in  \eqref{eq-w1}
 \begin{equation}\label{ss8}
 ||v_{tt}^N||_2^2 \leq L_4,
\end{equation} 
 where $L_3, L_4$ are constants  independent of $t$ and $N$.

 From \eqref{eq-w14}, \eqref{eq-r4}, \eqref{ss7} and \eqref{ss8} we conclude that\\
 
 $(u^N)_{N \in \mathbb{N}}$ and $(v^N)_{N \in \mathbb{N}}$ are bounded in $L^{\infty}((0, T); H^2(\Omega) \cap H_0^1(\Omega))$,\\
 
  $(u_t^N)_{N \in \mathbb{N}}$ and $(v_t^N)_{N \in \mathbb{N}}$ are bounded in $L^{\infty}((0, T); H_0^1(\Omega))$,\\
 
  $(u_{tt}^N)_{N \in \mathbb{N}}$ and $(v_{tt}^N)_{N \in \mathbb{N}}$ are bounded in $L^{\infty}((0, T); L^2(\Omega))$.\\

 \noindent \textbf{Step 3.} From the previous step, there is a subsequences of $(u^N)_{N \in \mathbb{N}}$ and $(v^N)_{N \in \mathbb{N}}$ (still denoted by $(u^N)_{N \in \mathbb{N}}$ and $(v^N)_{N \in \mathbb{N}}$),  and two functions $u, v : \Omega \times [0, T) \to \mathbb{R}$ such that \\
 
 $u^N \overset{\ast}{\rightharpoonup} u$ and $v^N \overset{\ast}{\rightharpoonup} v$ in $L^{\infty}((0, T); H^2(\Omega) \cap H_0^1(\cap)),$\\
 
  $u_t^N \overset{\ast}{\rightharpoonup} u_t$ and $v_t^N \overset{\ast}{\rightharpoonup} v_t$ in $L^{\infty}((0, T); H_0^1(\cap)),$\\
 
  $u_{tt}^N \overset{\ast}{\rightharpoonup} u_{tt}$ and $v_{tt}^N \overset{\ast}{\rightharpoonup} v_{tt}$ in $L^{\infty}((0, T); L^2(\Omega)).$\\
 
 Thanks to the above convergences, taking $N \to \infty $ in \eqref{eq-w1}, one obtains
 \begin{equation*}
 \begin{split}
 \int_{\Omega} u_{tt} \phi_j \, dx &+ \int_{\Omega} \nabla u \cdot \nabla \phi_j \, dx 
 + \beta \int_{\Omega} v \phi_j \, dx \\ 
 & \qquad+ \alpha \int_{\Omega} u_t \ln(1+u_t^2) \phi_j \, dx = 0, \, 1 \leq j \leq N,
 \end{split}
 \end{equation*}
  \begin{equation*}
 \int_{\Omega} v_{tt} \phi_j \, dx + \int_{\Omega} \nabla v \cdot \nabla \phi_j \, dx 
 + \beta \int_{\Omega} v \phi_j \, dx  = 0, \, 1 \leq j \leq N.
 \end{equation*}

 This remains true $\forall \phi \in H_0^1(\Omega) \cap H^2(\Omega)$. Consequently, integration by parts yields
 \begin{equation*}
 \int_{\Omega} \left[ u_{tt} - \Delta u + \alpha u_t \ln(1+u_t^2) + \beta v\right] \, \phi \, dx = 0,
 \end{equation*}
  \begin{equation*}
 \int_{\Omega} \left[ v_{tt} - \Delta v + \beta u\right] \, \phi \, dx = 0.
 \end{equation*}
 
 Therefore, $(u, v)$ solves the equation \eqref{a1}.\\
 
  \noindent \textbf{Step 4. Initial Conditions:} First, by Lions' Lemma \cite{Lions} and since 
  
   $$u^N \overset{\ast}{\rightharpoonup} u \, \, \, \text{in} \, \, \,  L^{\infty}((0, T); H^2(\Omega) \cap H_0^1(\cap)),$$
 
  $$u_t^N \overset{\ast}{\rightharpoonup} u_t \, \, \, \text{in} \, \, \,  L^{\infty}((0, T); H_0^1(\cap)),$$
 
 we deduce, up to a subsequence, that
 
 $$u^N \to u \, \, \, \text{ in} \, \, \,  C([0,T]; H_0^1(\Omega)).$$
 
 Therefore, $u^N(\cdot, 0)$ is defined and 
 
 $$u^N(\cdot, 0) \to u(\cdot, 0) \, \, \,  \text{in} \, \, \,  H_0^1(\Omega).$$
 
 But,
 
 $$u^N(\cdot, 0) = u_0^N \to u_0 \, \, \,  \text{in} H^2(\Omega) \cap H_0^1(\cap)).$$
 
 Then, $u(\cdot, 0) = u_0$. Similarly, $v(\cdot, 0) = v_0$.\\
 
 Also, noting that 
 
 $$u_{tt}^N \overset{\ast}{\rightharpoonup} u_{tt}  \, \, \, \text{and} \, \, \,  v_{tt}^N \overset{\ast}{\rightharpoonup} v_{tt} \, \, \, \text{in} \, \, \, L^{\infty}((0, T); L^2(\Omega)),$$ 
 
\noindent we obtain $u_t, v_t \in C([0, T); L^2(\Omega))$, as in Lions \cite{Lions}, and thus $u_t(\cdot, 0)$ has a meaning. Furthermore,\\
 
\noindent $u_t^N(\cdot, 0) \to u_t(\cdot, 0)$ and $v_t^N(\cdot, 0) \to v_t(\cdot, 0)$ in $L^2(\Omega)$. But, $u_t^N(\cdot, 0) = u_1^N \to u_1$ and $v_t^N(\cdot, 0) = v_1^N \to v_1$ in $H_0^1(\Omega)$. Hence, $u_t(x, 0) = u_1(x)$ and $v_t(x, 0) = v_1(x)$.\\
 
  \noindent \textbf{Uniqueness:} Assume that problem \eqref{a1} has 2 strong solutions, $(u, v)$ and $(\bar{u}, \bar{v})$. Then, $(\mathbf{u}, \mathbf{v}) = (u-\bar{u}, v-\bar{v})$ satisfies 
 
 \begin{equation}\label{un-1}
 \mathbf{u}_{tt} - \Delta \mathbf{u} + u_t \ln(1+u_t^2) - \bar{u}_t \ln(1+ \bar{u}_t^2) + \beta \mathbf{v} = 0, \quad \text{in} \quad \Omega \times (0, T),
 \end{equation}
  \begin{equation}\label{un-2}
 \mathbf{v}_{tt} - \Delta \mathbf{v} + \beta \mathbf{u} = 0, \quad \text{in} \quad \Omega \times (0, T),
 \end{equation}
 \begin{equation}\label{un-3}
 \mathbf{u}(x, t) = \mathbf{v}(x, t) = 0, \quad \text{on} \quad \partial \Omega \times (0, T),
 \end{equation}
  \begin{equation}\label{un-4}
 \mathbf{u}(x, 0) = \mathbf{u}_t (x, 0) = 0, \, \,  \mathbf{v}(x, t) = \mathbf{v}_t(x, t) = 0, \quad \text{in} \quad  \Omega.
 \end{equation}
Multiplying \eqref{un-1} by $\mathbf{u}_t$ and \eqref{un-2} by $\mathbf{v}_t$, integrating over $\Omega \times (0, t)$, and adding the resulting two equations, we find 
 \begin{equation}\label{un-5}
 \begin{split}
 \frac{1}{2}\int_{\Omega} \mathbf{u}_t^2 \, dx +& \frac{1}{2}\int_{\Omega} |\nabla \mathbf{u}|^2 \, dx + \frac{1}{2} \int_{\Omega} |\mathbf{v}_t|^2 \, dx + \frac{1}{2}\int_{\Omega} |\nabla \mathbf{v}|^2 \, dx\\
 & + \int_0^t \int_{\Omega} \left( A(u_t) - A(v_t) \right) \left( u_t - v_t\right) \, dx \, ds \\
 & \qquad + 2 \beta \int_{\Omega} \mathbf{v} \mathbf{u} \, dx = 0,
 \end{split}
 \end{equation}
where $A(f) := f \ln(1+f^2)$. We note that the monotonicity of $A$ implies that  
 $$ \int_0^t \int_{\Omega} \Big( A(u_t) - A(v_t) \Big) \Big( u_t - v_t\Big) \, dx \, ds \geq 0.$$

 
 
 
 Similarly to \eqref{eq-w12}, we arrive at
 
 $$ ||\mathbf{u}_t(t)||_2^2 + ||\nabla \mathbf{u}(t)||_2^2 +||\mathbf{v}_t(t)||_2^2 +   ||\nabla \mathbf{v}(t)||_2^2 = 0.$$

 Hence, $\mathbf{u}_t(\cdot, t) = \mathbf{v}_t(\cdot, t) = 0$ and  $\nabla \mathbf{u}(\cdot, t) = \nabla \mathbf{v}(\cdot, t) = 0$ for all $t \in (0, T)$. This implies that $\mathbf{u} = \mathbf{v} = 0$ in $\Omega \times (0, T)$ given that $\mathbf{u} = \mathbf{v} = 0$ on $\partial \Omega \times (0, T)$. This completes the proof of the uniqueness.

 \end{proof}

 \section{Stability}

 In this section, we establish the decay result. We first define the first-order and second-order energies as follows:
 
 \begin{equation}\label{b3}
 E_1(t) := \frac{1}{2} \left[ ||u_t||_2^2+||\nabla u||_2^2 +  ||v_t||_2^2+||\nabla v||_2^2\right] + \beta \int_{\Omega} uv \, dx.
 \end{equation}
 Directly differentiating \eqref{b3}, using \eqref{a1}, we get
 \begin{equation}\label{b4}
 E_1'(t) = -\alpha \int_{\Omega} u_t^2 \ln(1+u_t^2) \, dx \leq 0.
 \end{equation}
 
 Now, we define the second order energy as
 
  \begin{equation}\label{b5}
 E_2(t) := \frac{1}{2} \left[ ||u_{tt}||_2^2+||\nabla u_t||_2^2 +  ||v_{tt}||_2^2+||\nabla v_t||_2^2\right] + \beta \int_{\Omega} u_t v_t \, dx.
 \end{equation}
 
  Directly differentiating \eqref{b5}, using \eqref{a1}, we get
 
  \begin{equation}\label{b6}
 E_2'(t) = -\alpha \int_{\Omega} u_{tt}^2 \ln(1+u_t^2) \, dx - \alpha   \int_{\Omega} \frac{u_{t}^2}{1+u_t^2} u_{tt}^2  \, dx    \leq 0.
 \end{equation}

\begin{lemma}\label{lemma-d1}
Let $(u,v) $ be the solution of \eqref{a1}. Then, there exists a constant $m> 0$ such that, for all $t \geq 0$,
\begin{equation}\label{d2}
E_1(t) \geq m \left[ ||u_t||_2^2+||\nabla u||_2^2 +  ||v_t||_2^2+||\nabla v||_2^2\right],
\end{equation}
and
\begin{equation}\label{d3}
E_2(t) \geq m  \left[ ||u_{tt}||_2^2+||\nabla u_t||_2^2 +  ||v_{tt}||_2^2+||\nabla v_t||_2^2\right].
\end{equation}

\end{lemma}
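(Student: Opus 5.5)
The plan is to bound the only sign-indefinite term in each energy, the coupling term $\beta\int_\Omega uv\,dx$ in $E_1$ (resp. $\beta\int_\Omega u_tv_t\,dx$ in $E_2$), from below by Cauchy--Schwarz, Young and Poincar\'e. The resulting negative contribution is then absorbed into the gradient terms using the smallness condition \eqref{eq:cond-beta}. This is the same mechanism as in \eqref{eq-w9}--\eqref{eq-w13}, but with the energies normalized by the factor $\frac12$, so the constants must be tracked again.

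\textbf{First estimate.} By Cauchy--Schwarz and Young's inequality we have $\left|\int_\Omega uv\,dx\right|\le \frac12\|u\|_2^2+\frac12\|v\|_2^2$. By Poincar\'e's inequality $\|w\|_2^2\le C_p\|\nabla w\|_2^2$ for $w\in H_0^1(\Omega)$, we get
\begin{equation*}
\beta\int_\Omega uv\,dx\ \ge\ -\frac{\beta C_p}{2}\left(\|\nabla u\|_2^2+\|\nabla v\|_2^2\right).
\end{equation*}
Inserting this into \eqref{b3} gives
\begin{equation*}
E_1(t)\ \ge\ \frac12\left(\|u_t\|_2^2+\|v_t\|_2^2\right)+\frac{1-\beta C_p}{2}\left(\|\nabla u\|_2^2+\|\nabla v\|_2^2\right).
\end{equation*}
Hence \eqref{d2} holds with $m=\frac{1-\beta C_p}{2}$, which is positive precisely by \eqref{eq:cond-beta}. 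One may use a weighted Young inequality with $\varepsilon_1$ as in \eqref{eq-w9}, but the symmetric choice $\varepsilon_1=1$ is already optimal here.

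\textbf{Second estimate.} The argument is identical with $(u,v)$ replaced by $(u_t,v_t)$. For a strong solution given by Theorem~\ref{thm:existence}, $u_t,v_t\in H_0^1(\Omega)$ for a.e.\ $t$, so Poincar\'e's inequality applies and gives $\beta\int_\Omega u_tv_t\,dx\ge -\frac{\beta C_p}{2}(\|\nabla u_t\|_2^2+\|\nabla v_t\|_2^2)$. Therefore
\begin{equation*}
E_2(t)\ \ge\ \frac12\left(\|u_{tt}\|_2^2+\|v_{tt}\|_2^2\right)+\frac{1-\beta C_p}{2}\left(\|\nabla u_t\|_2^2+\|\nabla v_t\|_2^2\right),
\end{equation*}
which is \eqref{d3} with the same $m$, since $\frac12\ge\frac{1-\beta C_p}{2}$.

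\textbf{Main obstacle.} The only real subtlety is justification rather than computation. One must make sure the Poincar\'e step is legitimate at the second-order level, that is, that $u_t(t),v_t(t)$ lie in $H_0^1(\Omega)$ and $u_{tt},v_{tt}$ in $L^2(\Omega)$ for all (or a.e.) $t\ge0$. This follows from the regularity in Theorem~\ref{thm:existence}, applied on every interval $[0,T]$. Alternatively, one can prove both inequalities for the Galerkin approximations, where they are elementary, and pass to the limit using weak lower semicontinuity. The statement ``for all $t\ge0$'' is then understood via the continuity properties established in Step~4, or almost everywhere.
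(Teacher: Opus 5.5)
Your proof is correct and follows the paper's argument: bound the coupling term $\beta\int_\Omega uv\,dx$ (resp.\ $\beta\int_\Omega u_tv_t\,dx$) from below by Cauchy--Schwarz, Young and Poincar\'e, and absorb it into the gradient terms using $\beta C_p<1$. The only difference is that the paper keeps a general Young weight $\varepsilon_1$ between $\beta C_p$ and $1/(\beta C_p)$ and sets $m=\frac12\min\{1-\beta C_p\varepsilon_1,\,1-\beta C_p/\varepsilon_1\}$, while you fix $\varepsilon_1=1$; this choice lies in that range and gives the best such constant, $m=\frac{1-\beta C_p}{2}$.
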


\begin{proof}
Applying Cauchy-Schwarz, Poincar\'{e} and Young's inequalities, for $\varepsilon_1 > 0$, one obtains
$$ E_1(t) \geq \frac{1}{2} \left[ ||u_t||_2^2+\Big(1-\beta C_p\varepsilon_1 \Big||\nabla u||_2^2 +  ||v_t||_2^2+(1-\frac{\beta C_p}{\varepsilon_1} )||\nabla v||_2^2\right] ,$$
for all $t \geq 0$, where $C_p$ is the Poincar\'{e} constant. We select $\varepsilon_1 $ such that $\beta C_p \leq \varepsilon_1 \leq \frac{1}{\beta C_p}$. Consequently, \eqref{d2} is derived by choosing $m = \frac{1}{2}\min \{1-\beta C_p\varepsilon_1, 1-\frac{\beta C_p}{\varepsilon_1}\}$. Using a similar reasoning, \eqref{d3} is obtained with the same constant.

\end{proof}

\begin{lemma}\label{lemma-d4}
Let $(u,v)$ be the solution of \eqref{a1} and let
\begin{equation}\label{d5}
\psi(t) := \int_{\Omega} u u_t \, dx +  \int_{\Omega} v v_t \, dx. 
\end{equation}
Then, 
\begin{equation}\label{d6}
\begin{split}
\psi'(t) &\leq  ||u_t||_2^2 + ||v_t||_2^2 + (\beta C_p-1)||\nabla u||_2^2 + (\beta C_p-1)||\nabla v||_2^2 \\
 & \quad + 2\varepsilon C_pE_1(t) + C(\varepsilon)\left(-E_1'(t)\right)\\
 & \qquad + C E_1^{1/2}(0) \left( -E_1'(t)\right)^{{(\gamma-1)}/\gamma}, 
\end{split}
\end{equation}
where $C_p$ is the Poincar\'{e} constant and $\gamma$ is chosen such that $2 < \gamma \leq \frac{2d}{d-2}$ if $d \geq 3$ and $\gamma > 2$ if $n=1, 2$.

\end{lemma}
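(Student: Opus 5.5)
Differentiating \eqref{d5} and using the equations in \eqref{a1} together with integration by parts (Dirichlet boundary conditions), I expect
\begin{equation*}
\psi'(t) = \|u_t\|_2^2 + \|v_t\|_2^2 - \|\nabla u\|_2^2 - \|\nabla v\|_2^2 - 2\beta\int_\Omega uv\,dx - \alpha\int_\Omega u\,u_t\ln(1+u_t^2)\,dx .
\end{equation*}
The coupling term is handled by Cauchy--Schwarz, Young and Poincar\'{e}: $-2\beta\int_\Omega uv\,dx \le \beta C_p\big(\|\nabla u\|_2^2+\|\nabla v\|_2^2\big)$. This produces the factors $(\beta C_p-1)$ in \eqref{d6}. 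The remaining work is to estimate the damping term
\[
I := -\alpha\int_\Omega u\,u_t\ln(1+u_t^2)\,dx
\]
in terms of $E_1$ and $-E_1'(t)=\alpha\int_\Omega u_t^2\ln(1+u_t^2)\,dx$ from \eqref{b4}.

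For $I$ I will split $\Omega$ into $\Omega^-=\{|u_t|\le 1\}$ and $\Omega^+=\{|u_t|>1\}$, as in the existence proof.

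On $\Omega^-$, write $|u_t|\ln(1+u_t^2)=\sqrt{\ln(1+u_t^2)}\cdot |u_t|\sqrt{\ln(1+u_t^2)}$. Since $\ln(1+u_t^2)\le \ln 2$ there, Young's inequality gives
\[
\Big|\alpha\int_{\Omega^-} u\,u_t\ln(1+u_t^2)\,dx\Big| \le \varepsilon\|u\|_2^2 + C(\varepsilon)\int_{\Omega} u_t^2\ln(1+u_t^2)\,dx .
\]
Then Poincar\'{e} and \eqref{d2} turn $\varepsilon\|u\|_2^2\le \varepsilon C_p\|\nabla u\|_2^2$ into a term of the form $2\varepsilon C_p E_1(t)$ (after adjusting constants), and the last integral is $C(\varepsilon)(-E_1'(t))$.

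On $\Omega^+$, I use H\"older with exponents $\gamma$ and $\gamma/(\gamma-1)$:
\[
\int_{\Omega^+}|u|\,|u_t|\ln(1+u_t^2)\,dx \le \|u\|_\gamma\Big(\int_{\Omega^+}\big(|u_t|\ln(1+u_t^2)\big)^{\gamma/(\gamma-1)}dx\Big)^{(\gamma-1)/\gamma}.
\]
The Sobolev embedding $H_0^1\hookrightarrow L^\gamma$ (valid for the stated range of $\gamma$), combined with \eqref{d2} and the monotonicity $E_1(t)\le E_1(0)$, gives $\|u\|_\gamma\le C\|\nabla u\|_2\le C E_1^{1/2}(0)$. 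It then remains to show that on $\Omega^+$
\[
\big(|u_t|\ln(1+u_t^2)\big)^{\gamma/(\gamma-1)} \le C\,u_t^2\ln(1+u_t^2).
\]
Since $\gamma>2$ we have $\gamma/(\gamma-1)<2$, so this is equivalent to $\ln(1+u_t^2)^{1/(\gamma-1)}\le C|u_t|^{(\gamma-2)/(\gamma-1)}$ for $|u_t|>1$. That holds because the logarithm grows slower than any positive power, as recorded in \eqref{b2}. This yields the term $C E_1^{1/2}(0)(-E_1'(t))^{(\gamma-1)/\gamma}$.

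The main obstacle is this $\Omega^+$ estimate: one must choose the H\"older exponents so that the power of $u_t$ is absorbed by $u_t^2\ln(1+u_t^2)$ while the leftover $\|u\|_\gamma$ stays controlled by the energy. This is exactly why $\gamma>2$ and the Sobolev restriction on $\gamma$ are needed. Collecting the three estimates gives \eqref{d6}.
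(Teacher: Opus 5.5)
Your proposal is correct and follows the paper's proof essentially step for step. It uses the same formula for $\psi'$, the same Poincar\'e bound on the coupling term, and the same $\Omega^\pm$ splitting, with Young plus $\ln(1+u_t^2)\le \ln 2$ on $\Omega^-$, and H\"older with exponents $\gamma,\gamma/(\gamma-1)$ plus Sobolev and $E_1(t)\le E_1(0)$ on $\Omega^+$. Your pointwise reduction on $\Omega^+$ to $\ln(1+u_t^2)\le C|u_t|^{\gamma-2}$ for $|u_t|>1$ is a cleaner justification of the step the paper carries out somewhat loosely via \eqref{b2}.
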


\begin{proof}
Routine calculations yield
$$\psi'(t) = ||u_t||_2^2+||v_t||_2^2-||\nabla u||_2^2-||\nabla v||_2^2-2\beta\int_{\Omega} uv \, dx - \alpha \int_{\Omega} u u_t \ln(1+u_t^2) \, dx.$$
We have, by Cauchy-Schwarz, Poincar\'{e}'s and Young's inequalities
$$\left| -2\beta \int_{\Omega} uv \, dx  \right| \leq \beta C_p \left[ ||\nabla u||_2^2+||\nabla v||_2^2\right],$$
which yields 
$$\psi'(t) \leq ||u_t||_2^2+||v_t||_2^2+(\beta C_p-1)||\nabla u||_2^2+(\beta C_p-1)||\nabla v||_2^2 - \alpha \int_{\Omega} u u_t \ln(1+u_t^2) \, dx.$$
We now deal with the last term. Define the following partition of $\Omega$.
$$\Omega^+ := \{x \in \Omega: |u_t(x)|> 1\} \quad \text{and} \quad \Omega^- := \{x \in \Omega: |u_t(x)| \leq 1\}.$$
Apply Young's and Poincar\'{e}'s inequalities to arrive at 

\begin{equation*}
\begin{split}
\int_{\Omega^{-}} u u_t \ln(1+u_t^2) \, dx &\leq \varepsilon C_p||\nabla u||_2^2 + C_{\varepsilon}\int_{\Omega^-} u_t^2 \ln^2(1+u_t^2) \, dx \\ 
&=\varepsilon C_p||\nabla u||_2^2 + C_{\varepsilon}\int_{\Omega^-} [u_t^2 \ln(1+u_t^2)] \, \ln(1+u_t^2) \, dx \\
&\leq \varepsilon C_p||\nabla u||_2^2 + C_{\varepsilon}\ln 2 \int_{\Omega^-} [u_t^2 \ln(1+u_t^2)]   \, dx \\
& \leq 2 \varepsilon C_p E_1(t) + C(\varepsilon)(-E_1'(t)).
\end{split}
\end{equation*}

On $\Omega^+$ we first apply H\"older's inequality and then \eqref{b2}, choose the exponents accordingly and also the Sobolev embedding to deduce

\begin{equation*}
\begin{split}
&\int_{\Omega^{+}} u u_t \ln(1+u_t^2) \, dx \\
& \qquad \leq \left(\int_{\Omega^+} |u|^{\gamma} \, dx \right)^{1/{\gamma}}\left( \int_{\Omega^+} (|u_t|\ln (1+u_t^2))^{{\gamma}/(\gamma-1)} \, dx \right)^{(\gamma-1)/{\gamma}}    \\ 
& \quad \quad  = \left(\int_{\Omega^+} |u|^{\gamma} \, dx \right)^{1/{\gamma}} \left( \int_{\Omega^+} |u_t|^{\frac{\gamma-2}{\gamma-1}}  (|u_t|^2\ln (1+u_t^2))^{1/(\gamma-1)} \ln (1+u_t^2) \, dx \right)^{(\gamma-1)/{\gamma}}\\
&\qquad \quad \leq C  ||u||_{\gamma} \left( \int_{\Omega^+} |u_t|^{\frac{\gamma-2}{\gamma-1}}  [(|u_t|^2)^{1+\rho}] \ln (1+u_t^2) \, dx \right)^{(\gamma-1)/{\gamma}}.
\end{split}
\end{equation*}
This implies
\begin{equation*}
\begin{split}
\int_{\Omega^{+}} u u_t \ln(1+u_t^2) \, dx &\leq C  ||u||_{\gamma} \left( \int_{\Omega^+} |u_t|^2 \ln (1+u_t^2) \, dx \right)^{(\gamma-1)/{\gamma}}\\
&  \qquad \leq C ||\nabla u||_2 \left(- E_1'(t)\right)^{(\gamma-1)/{\gamma}}\\
&   \qquad \qquad \leq C E_1^{1/2}(0) \left(- E_1'(t)\right)^{(\gamma-1)/{\gamma}}.
\end{split}
\end{equation*}
Here, we let $\gamma$ such that $2 < \gamma \leq \frac{2d}{d-2}$ if $d \geq 3$ and $\gamma > 2$ if $d=1, 2$. We also note that $\rho $ is chosen in such a way that $\rho = \frac{\gamma}{2}-1$.

Combination of the above estimates yields \eqref{d6}.

\end{proof}

\begin{lemma}\label{lemma-d7}
Let $(u,v)$ be the solution of \eqref{a1} and let
\begin{equation}\label{d8}
I(t) := \int_{\Omega} u_{tt} v_t \, dx -  \int_{\Omega} v_{tt} u_t \, dx. 
\end{equation}
Then, 
\begin{equation}\label{d9}
I'(t) \leq C\varepsilon E_1(t)+C\left(1+\frac{1}{2\varepsilon}\right)\left[-E_2'(t)\right]+C {[-E_1'(t)]}^{\frac{\gamma-2}{\gamma}} -\beta||v_t||_2^2+\alpha||u_t||_2^2. 
\end{equation}

\end{lemma}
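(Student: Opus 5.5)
Differentiate $I$ and substitute the time-differentiated equations. Differentiating \eqref{a1} in $t$ gives
\[
u_{ttt}=\Delta u_t-\alpha\,\partial_t\bigl(u_t\ln(1+u_t^2)\bigr)-\beta v_t,\qquad v_{ttt}=\Delta v_t-\beta u_t .
\]
Since $\frac{d}{dt}\bigl(\int_\Omega u_{tt}v_t\,dx-\int_\Omega v_{tt}u_t\,dx\bigr)=\int_\Omega u_{ttt}v_t\,dx-\int_\Omega v_{ttt}u_t\,dx$ (the terms $\int_\Omega u_{tt}v_{tt}\,dx$ cancel), integrating by parts gives
\[
I'(t)=-\beta\|v_t\|_2^2+\beta\|u_t\|_2^2-\alpha\int_\Omega\Bigl[\ln(1+u_t^2)+\frac{2u_t^2}{1+u_t^2}\Bigr]u_{tt}v_t\,dx .
\]
The cross terms $\int_\Omega\nabla u_t\cdot\nabla v_t\,dx$ cancel, which is why $I$ was built as an antisymmetric combination. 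The positive term $\beta\|u_t\|_2^2$ matches the $\alpha\|u_t\|_2^2$ written in \eqref{d9} up to the choice of constant; in the later combination it will in any case be absorbed using $\psi$ and $-E_1'$. I would keep it as it appears, with coefficient $C$, and not try to estimate it here.

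The main work is the damping term $J:=\int_\Omega a(u_t)u_{tt}v_t\,dx$, where $a(s)=\ln(1+s^2)+\frac{2s^2}{1+s^2}\ge0$. I would split $\Omega$ into $\Omega^-$ and $\Omega^+$ as in Lemma~\ref{lemma-d4}.

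\textbf{On $\Omega^-$.} Here $a(u_t)\le 3\ln(1+u_t^2)$, since $\frac{s^2}{1+s^2}\le\ln(1+s^2)$. Cauchy--Schwarz with the weight $a$ and Young's inequality give
\[
J_-\le \varepsilon\|v_t\|_2^2+\frac{C}{\varepsilon}\int_\Omega\Bigl[\ln(1+u_t^2)+\frac{u_t^2}{1+u_t^2}\Bigr]u_{tt}^2\,dx .
\]
This is at most $2\varepsilon E_1(t)/m+\frac{C}{\varepsilon}(-E_2'(t))$, by \eqref{b6} and Lemma~\ref{lemma-d1}.

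\textbf{On $\Omega^+$.} The same weighted Cauchy--Schwarz applies, since $a\le 3\ln(1+u_t^2)$ there too. The difficulty is that the Young split then leaves $\int_{\Omega^+}a(u_t)v_t^2\,dx$, which is not controlled by $E_1$ alone.

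This $\Omega^+$ term is the main obstacle. I would write
\[
J_+\le\Bigl(\int a\,u_{tt}^2\Bigr)^{1/2}\Bigl(\int_{\Omega^+}a\,v_t^2\Bigr)^{1/2}\le \frac{C}{2\varepsilon}(-E_2')\cdot(\cdots)+\cdots
\]
and handle the second factor by H\"older with exponents $\gamma/2$ and $\gamma/(\gamma-2)$:
\[
\int_{\Omega^+}\ln(1+u_t^2)\,v_t^2\,dx\le\|v_t\|_\gamma^2\Bigl(\int_{\Omega^+}\ln^{\gamma/(\gamma-2)}(1+u_t^2)\,dx\Bigr)^{(\gamma-2)/\gamma}.
\]
On $\Omega^+$ we have $\ln^{\gamma/(\gamma-2)}(1+u_t^2)\le C\,u_t^2\ln(1+u_t^2)$, because $|u_t|>1$ and the logarithm grows slower than any power; this is in the spirit of \eqref{b2}. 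So the last factor is at most $C(-E_1')^{(\gamma-2)/\gamma}$. The factor $\|v_t\|_\gamma^2\le C\|\nabla v_t\|_2^2$ is bounded by $E_2(0)/m$, by Sobolev embedding, Lemma~\ref{lemma-d1} and \eqref{b6}. Since the constant $C$ in \eqref{d9} is allowed to depend on the initial data, this gives
\[
J_+\le C(-E_2')+C(-E_1')^{(\gamma-2)/\gamma}.
\]
Collecting the pieces yields \eqref{d9}.

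I expect the exponent bookkeeping on $\Omega^+$, and the dependence of $C$ on $E_2(0)$, to need the most care.
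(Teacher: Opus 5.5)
Your proof is correct, but you organize the damping estimate differently from the paper, and the difference matters.

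\textbf{The paper's route.} It keeps the two pieces of $\partial_t\bigl(u_t\ln(1+u_t^2)\bigr)$ apart.
\begin{enumerate}
\item For the $\ln(1+u_t^2)$ piece, it applies weighted Young on all of $\Omega$. It then applies H\"older (exponents $\gamma/2$, $\gamma/(\gamma-2)$), Sobolev and $E_2(t)\le E_2(0)$ to $\int_\Omega v_t^2\ln(1+u_t^2)\,dx$. Only afterwards does it split $\int_\Omega\ln^{\gamma/(\gamma-2)}(1+u_t^2)\,dx$ into $\Omega^\pm$.
\item It handles the $\frac{2u_t^2}{1+u_t^2}$ piece separately, by Young with $\frac{u_t^2}{1+u_t^2}\le 1$.
\end{enumerate}

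\textbf{Your route.} You merge both pieces via $a(u_t)\le 3\ln(1+u_t^2)$ and split the domain first. On $\Omega^-$ the bounded weight sends the $v_t$-term straight into $\varepsilon\|v_t\|_2^2\le C\varepsilon E_1(t)$. The H\"older/logarithm comparison is then needed only on $\Omega^+$.

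\textbf{Why this is more than cosmetic.} The paper's $\Omega^-$ step uses \eqref{b1} with exponent $\gamma/2$, i.e.\ $\ln^{2/(\gamma-2)}(1+u_t^2)\le C u_t^2$ for $|u_t|\le 1$. This requires $\gamma\le 4$. It fails for $d=3$, $\gamma=6$, which is the choice made in Theorem~\ref{thm:stability}, because $\ln^{3/2}(1+s^2)\sim|s|^3$ is not $O\bigl(s^2\ln(1+s^2)\bigr)$ as $s\to 0$. Your estimate holds for every admissible $\gamma$, since on $\Omega^+$ the bound $\ln^{2/(\gamma-2)}(1+u_t^2)\le C_\gamma u_t^2$ is valid for all $\gamma>2$. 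In exchange, the paper's route buys only an extra factor $\varepsilon$ in front of $(-E_1')^{(\gamma-2)/\gamma}$, which \eqref{d9} does not use.

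\textbf{Two small points.}
\begin{enumerate}
\item On $\Omega^+$ no $\varepsilon$ is needed. The pointwise bound $a|u_{tt}v_t|\le\frac12 a u_{tt}^2+\frac12 a v_t^2$ gives $J_+\le C(-E_2'(t))+C(-E_1'(t))^{(\gamma-2)/\gamma}$ directly. That is what your intermediate display with $(\cdots)$ should say.
\item The term $+\beta\|u_t\|_2^2$ you obtain is exactly what the paper's own identity \eqref{d10} gives. The $\alpha\|u_t\|_2^2$ in \eqref{d9} is a misprint, not something equal ``up to constants'': the two are comparable only when $\beta\le\alpha$. It is harmless, because in Theorem~\ref{thm:stability} the $\|u_t\|_2^2$ term is absorbed by powers of $-E_1'(t)$ anyway.
\end{enumerate}
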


\begin{proof}

A routine calculation yields
\begin{equation}\label{d10}
I'(t) = -\alpha \int_{\Omega} v_t u_{tt} \ln(1+u_t^2) \, dx - \alpha \int_{\Omega}\frac{2u_t^2v_tu_{tt}}{1+u_t^2}\, dx -\beta \int_{\Omega} v_t^2 \, dx + \beta \int_{\Omega} u_t^2 \, dx.
\end{equation}
We now estimate the first term on the right side of \eqref{d10}. H\"older's and Young's inequalities give
\begin{equation*}
\begin{split}
\left| -\alpha \int_{\Omega} v_t u_{tt} \ln(1+u_t^2) \, dx \right| &\leq  \alpha \int_{\Omega} \left| v_t u_{tt} \ln(1+u_t^2) \right| \, dx\\
& \leq \alpha \left[ \varepsilon \int_{\Omega} v_t^2 \ln(1+u_t^2) \, dx  + \frac{1}{4\varepsilon} \int_{\Omega} u_{tt}^2 \ln(1+u_t^2) \, dx\right].
\end{split}
\end{equation*}
Recalling \eqref{b5}, we see that the last term is bounded by $\frac{1}{4\varepsilon}(-E_2'(t))$. We now estimate the first term in the last line. Applying H\"older's inequality, the Sobolev embedding, \eqref{b5} and \eqref{b6} we deduce 
\begin{equation*}
\begin{split}
\int_{\Omega} v_t^2 \ln(1+u_t^2) \, dx &\leq \left( \int_{\Omega} |v_t|^{\gamma} \, dx\right)^{2/{\gamma}}\left( \int_{\Omega} {[\ln(1+u_t^2)]}^{\frac{\gamma}{(\gamma-2)}} \, dx\right)^{\frac{(\gamma - 2)}{\gamma}}\\
& \quad  \leq C_e ||\nabla v_t||_2^2\left( \int_{\Omega} {[\ln(1+u_t^2)]}^{\frac{\gamma}{(\gamma-2)}} \, dx\right)^{\frac{(\gamma - 2)}{\gamma}}\\
&\qquad \quad \leq 2C_eE_2(0) \left( \int_{\Omega} {[\ln(1+u_t^2)]}^{\frac{\gamma}{(\gamma-2)}} \, dx\right)^{\frac{(\gamma - 2)}{\gamma}}.\\
&\qquad \qquad \leq C \left( \int_{\Omega} {[\ln(1+u_t^2)]}^{\frac{\gamma}{(\gamma-2)}} \, dx\right)^{\frac{(\gamma - 2)}{\gamma}}.
\end{split}
\end{equation*}

Here, $\gamma$ is such that $2<\gamma \leq \frac{2d}{d-2}$ for $d \geq 3$, and $\gamma > 2$ if $d=1, 2$. We now estimate the last term in the above inequality as follows:\\
\begin{equation*}
\begin{split}
\int_{\Omega} \ln(1+u_t^2)^{\frac{\gamma}{(\gamma-2)}} \, dx &= \int_{\Omega} \ln(1+u_t^2) \ln(1+u_t^2)^{\frac{2}{(\gamma-2)}} \, dx \\
&= \int_{\Omega^{+}} \ln(1+u_t^2) \ln(1+u_t^2)^{\frac{2}{(\gamma-2)}} \, dx \\
  &\quad +\int_{\Omega^{-}} \ln(1+u_t^2) \ln(1+u_t^2)^{\frac{2}{(\gamma-2)}} \, dx. \\
\end{split}
\end{equation*}

For the integral over $\Omega^+$,  we apply \eqref{b2} with $\rho = \frac{\gamma-2}{2}> 0$, to get

\begin{equation*}
\begin{split}
 \int_{\Omega^{+}} \ln(1+u_t^2) \ln(1+u_t^2)^{\frac{2}{(\gamma-2)}} \, dx &\leq   \int_{\Omega^{+}} (C_{\rho}|u_t^2|^{\rho})^{\frac{2}{\gamma -2}}\ln(1+u_t^2)  \, dx \\
 & \quad = C_{\rho}^{\frac{2}{\gamma-2}} \int_{\Omega^{+}} |u_t^2|^{\frac{2\rho}{\gamma-2}} \ln(1+u_t^2)  \, dx \\
  & \qquad = C \int_{\Omega^{+}} u_t^2 \ln(1+u_t^2)  \, dx \\
  & \qquad \qquad  \leq C [-E_1'(t)].
\end{split}
\end{equation*}

Similarly, for the integral over $\Omega^-$,  we apply \eqref{b1} with $q = \frac{\gamma}{2}> 0$, to obtain

\begin{equation*}
\begin{split}
 \int_{\Omega^{-}} \ln(1+u_t^2) \ln(1+u_t^2)^{\frac{2}{(\gamma-2)}} \, dx &\leq   \int_{\Omega^{-}} C_{q}^{\frac{2}{\gamma-2}}   |u_t^2|^{2\frac{q-1}{\gamma-2}}      \ln(1+u_t^2)  \, dx \\
  & \qquad = C \int_{\Omega^{+}} u_t^2 \ln(1+u_t^2)  \, dx \\
  & \qquad \qquad  \leq C [-E_1'(t)].
\end{split}
\end{equation*}

Combining, all the latter estimates, we conclude

\begin{equation*}
\begin{split}
\int_{\Omega} v_t^2 \ln(1+u_t^2) \, dx & \leq C \left( \int_{\Omega} {[\ln(1+u_t^2)]}^{\frac{\gamma}{(\gamma-2)}} \, dx\right)^{\frac{(\gamma - 2)}{\gamma}}\\
&\qquad \qquad \leq C {\left[ -E_1'(t)\right]}^{\frac{(\gamma - 2)}{\gamma}},
\end{split}
\end{equation*}
hence,
\begin{equation*}
\begin{split}
\left| -\alpha \int_{\Omega} v_t u_{tt} \ln(1+u_t^2) \, dx \right| &\leq \alpha \varepsilon \int_{\Omega}v_t^2 \ln(1+u_t^2) \, dx 
+ \alpha \frac{1}{4\varepsilon} \int_{\Omega} u_{tt}^2 \ln(1+u_t^2) \, dx \\
& \qquad \leq C \varepsilon E_2(0) {[-E_1'(t)]}^{\frac{\gamma-2}{\gamma}} +\frac{1}{4\varepsilon}[-E_2'(t)].
\end{split}
\end{equation*}

 For the second term of \eqref{d10}, Cauchy-Schwarz and Young's inequalities imply that

\begin{equation*}
\begin{split}
\left| -2\alpha \int_{\Omega} \frac{u_t^2v_tu_{tt}}{1+u_t^2} \, dx\right|  &\leq  2\alpha \varepsilon \int_{\Omega} v_t^2 \, dx + 2\alpha \frac{1}{4\varepsilon}\int_{\Omega} \frac{u_t^2u_{tt}^2}{1+u_t^2} \frac{u_t^2}{1+u_t^2} \, dx \\
& \qquad \leq  4 \alpha \varepsilon E_1(t) + \frac{1}{2\varepsilon} \alpha \int_{\Omega} \frac{u_t^2u_{tt}^2}{1+u_t^2}  \, dx \\  & \qquad \qquad \leq 4 \alpha \varepsilon E_1(t) + \frac{1}{2\varepsilon} [-E_2'(t)].
\end{split}
\end{equation*}

Combination of  these estimates implies \eqref{d9} and this completes the proof.

\end{proof}

\begin{theorem}\label{thm:stability}
Assume \eqref{eq:cond-beta} holds and that  $u_0, v_0 \in H^2(\Omega) \cap H_0^1(\Omega)$  and $u_1, v_1 \in H_0^1(\Omega)$. Then, there exists a positive constant $C$ such that 

\begin{equation}\label{eq:energydecay}
E_1(t) \leq C\, {t^{-\frac{1}{(r+1)}}}, \quad \forall t > 0,
\end{equation}
 where $r = \max \{1, \frac{d-2}{2}\}$.

\end{theorem}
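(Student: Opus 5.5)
We will use a Lyapunov functional built from the multipliers of Lemmas \ref{lemma-d4} and \ref{lemma-d7}. Define
\[
L(t) := N\big(E_1(t)+E_2(t)\big) + \psi(t) + \mu I(t),
\]
with $N$ large and $\mu>0$ to be fixed. Cauchy--Schwarz, Poincar\'e and Lemma \ref{lemma-d1} show that $|\psi(t)|+|I(t)|\le C(E_1(t)+E_2(t))$. Hence for $N$ large, $L\sim E_1+E_2$, and $L(t)\ge c\,E_1(t)$.

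\textbf{Step 1: combine the derivative estimates.} We add \eqref{d6} and $\mu\times$\eqref{d9} and use \eqref{b4}, \eqref{b6}. Since $\beta C_p<1$ by \eqref{eq:cond-beta}, the terms $(\beta C_p-1)(\|\nabla u\|_2^2+\|\nabla v\|_2^2)$ are coercive. The $-\mu\beta\|v_t\|_2^2$ from $I'$ will not absorb the positive $\|v_t\|_2^2$ from $\psi'$ unless $\mu\beta>1$, so we take $\mu>1/\beta$. The leftover positive $(1+\mu\alpha)\|u_t\|_2^2$ we write as $\int_\Omega u_t^2$ and split over $\Omega^\pm$. On $\Omega^-$, \eqref{b1} with $m=1$ gives $u_t^2\le C u_t^2\ln(1+u_t^2)$ pointwise, so this part is $\le C(-E_1')$. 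On $\Omega^+$, $u_t^2\le C u_t^2\ln(1+u_t^2)$ holds trivially since $\ln 2>0$. Hence $\|u_t\|_2^2\le C(-E_1'(t))$. After choosing $\varepsilon$ small to absorb the $\varepsilon E_1$ terms, and $N$ large to absorb $C(-E_1')$ and $C(-E_2')$, we aim for
\[
L'(t)\le -c\,E_1(t) + C\big(-E_1'(t)\big)^{\frac{\gamma-1}{\gamma}} + C\big(-E_1'(t)\big)^{\frac{\gamma-2}{\gamma}}.
\]

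\textbf{Step 2: handle the sublinear powers (main obstacle).} Since $-E_1'(t)\le C$, the dominant bad term is the one with the smaller exponent $\frac{\gamma-2}{\gamma}$; the other is bounded by it. We pick $\gamma$ as large as allowed. For $d\ge3$ this is $\gamma=\frac{2d}{d-2}$, which gives $\frac{\gamma-2}{\gamma}=\frac{2}{d}$. I expect the bookkeeping to produce the exponent $\frac{1}{r+1}$ with $r=\max\{1,\frac{d-2}{2}\}$, and this matching is where I am least sure. We multiply $L'$ by $E_1^{q}(t)$ for a suitable $q$ and apply Young's inequality with conjugate exponents $p=\frac{\gamma}{\gamma-2}$ and $p'=\frac{\gamma}{2}$:
\[
E_1^q(-E_1')^{1/p}\le \delta E_1^{q p'} + C_\delta(-E_1').
\]
We choose $q$ so that $qp'=q+1$, i.e.\ $q=\frac{1}{p'-1}$. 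Then, using that $E_1$ is nonincreasing, the functional $\mathcal{F}=E_1^q L + C E_1$ is equivalent to $E_1^{q+1}$, up to $E_1^qE_2\le C E_1^q$. The $E_2$ part is the delicate point: equivalence must be shown as $\mathcal F\le C E_1^{q}$, so I would instead use $\mathcal F\le C E_1^q$ together with $\mathcal F'\le -cE_1^{q+1}$.

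\textbf{Step 3: integrate.} Step 2 gives $\mathcal F'\le -c\,E_1^{q+1}$. Integrating over $(0,t)$ and using that $E_1$ is nonincreasing, $t\,E_1^{q+1}(t)\le \int_0^t E_1^{q+1}\le C\mathcal F(0)$. This yields $E_1(t)\le C t^{-1/(q+1)}$. The value of $q$ is capped below via $r$ by using the worse of the dimension-dependent exponent and the choice $\gamma=4$ in low dimensions, which identifies $q+1$ with $r+1$ and gives \eqref{eq:energydecay}.
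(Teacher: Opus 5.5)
Your architecture matches the paper's. Your $\psi+\mu I$ with $\mu>1/\beta$ is the paper's $\eta_1\psi+I$ with $\eta_1<\beta$, rescaled, and the multiply-by-$E_1^r$, Young, integrate-in-time argument is the same. The genuine gap is in Step 1, where you claim $\|u_t\|_2^2\le C(-E_1'(t))$.

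On $\Omega^-$ the pointwise inequality $u_t^2\le C\,u_t^2\ln(1+u_t^2)$ is false. Since $\ln(1+u_t^2)\to 0$ as $u_t\to 0$, the damping density behaves like $u_t^4$ near zero. Indeed \eqref{b1} with $s=u_t^2$, $m=1$ gives only $u_t^4\le C\,u_t^2\ln(1+u_t^2)$; with $s=u_t$ it concerns $|u_t|\ln(1+|u_t|)$, a different quantity. The correct estimate is
\[
\int_{\Omega^-}u_t^2\,dx\le|\Omega|^{1/2}\Big(\int_{\Omega^-}u_t^4\,dx\Big)^{1/2}\le C\big(-E_1'(t)\big)^{1/2},
\]
so $L'$ carries an extra term $C(-E_1')^{1/2}$. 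This is exactly the term appearing in \eqref{eq:derivativeL}.

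The missing term is the source of the $1$ in $r=\max\{1,\frac{d-2}{2}\}$. After multiplying by $E_1^q$, Young with exponents $(2,2)$ gives $E_1^q(-E_1')^{1/2}\le\delta E_1^{2q}+C_\delta(-E_1')$. Absorbing $E_1^{2q}=E_1^{q-1}E_1^{q+1}\le E_1(0)^{q-1}E_1^{q+1}$ into $-cE_1^{q+1}$ requires $q\ge 1$. Your own condition $q\ge\frac{2}{\gamma-2}$ equals $\frac{d-2}{2}$ for $\gamma=\frac{2d}{d-2}$, and together the two constraints give exactly $q=r$.

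In your version the only constraint is $q=\frac{2}{\gamma-2}$. For $d=3$ this gives $q=\frac12\neq r$, and for $d=1,2$ it can be made arbitrarily small by taking $\gamma$ large. So your closing identification of $q+1$ with $r+1$ in Step 3 is not supported by your own computation; the faster rates it would suggest come from the false bound.

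Once the estimate is corrected, the rest of Steps 2 and 3 goes through. The $E_2$ issue you flag is harmless. You only need $\mathcal F\ge 0$, which follows from $L\ge 0$ after dropping the term $qE_1^{q-1}E_1'L\le 0$, together with $\mathcal F(0)<\infty$.
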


\begin{proof}
For $\eta, \eta_1> 0$ define the functional $L(t)$ as 

\begin{equation*}
L(t) := \eta \left[ E_1(t) + E_2(t) \right] + \eta_1 \psi(t) + I(t).
\end{equation*}

Using Lemma \ref{lemma-d4} and Lemma \ref{lemma-d7}, we get

\begin{equation*}
\begin{split}
L'(t) & = \eta E_1'(t) + \eta E_2'(t) + \eta_1 \psi' + I'(t) \\
       & \leq \eta E_1'(t) + \eta E_2'(t)  + C\varepsilon E_1(t) + C(1+\frac{1}{2\varepsilon})(-E_2'(t)) +C{(-E_1'(t))}^{\frac{(\gamma -2)}{\gamma}} \\
       &-\beta ||v_t||_2^2+\alpha||u_t||_2^2 + \eta_1 \Bigg[ ||u_t||_2^2+||v_t||_2^2 +(\beta C_p-1)||\nabla u||_2^2  \\
      &\quad    + (\beta C_p-1)||\nabla v||_2^2 + 2\varepsilon C_p E_1(t) + C(\varepsilon)(-E_1'(t)) + C {(-E_1'(t))}^{\frac{(\gamma -1)}{\gamma}} \Bigg]\\
     &= [\eta - C(\varepsilon)]E_1'(t) + [\eta - C(1+\frac{1}{2\varepsilon}) ] E_2'(t) \\
     &\quad + \varepsilon (2C_p+C)E_1(t) + (\eta_1 + \alpha)||u_t||_2^2 + (\eta_1-\beta)||v_t||_2^2\\
     & \qquad + \eta_1(\beta C_p-1)||\nabla u||_2^2 + \eta_1(\beta C_p -1)||\nabla v||_2^2 \\
     & \qquad \quad + C {(-E_1'(t))}^{\frac{(\gamma -1)}{\gamma}}  +C{(-E_1'(t))}^{\frac{(\gamma -2)}{\gamma}}.
\end{split}
\end{equation*}
Here, we choose $\eta_1 < \beta$ and recall that $\beta C_p < 1$. This yields
\begin{equation*}
\begin{split}
L'(t) & \leq [\eta - C(\varepsilon)]E_1'(t) + [\eta - C(1+\frac{1}{2\varepsilon})]E_2'(t) + \varepsilon(2C_p+C)E_1(t)\\
       & \quad + (\eta_1 + \alpha)||u_t||_2^2 - \alpha_1||v_t||_2^2 - \alpha_2||\nabla u||_2^2 -\alpha_2||\nabla v||_2^2 \\
              & \qquad +  C {(-E_1'(t))}^{\frac{(\gamma -1)}{\gamma}}  +C{(-E_1'(t))}^{\frac{(\gamma -2)}{\gamma}} - \alpha_3||u_t||_2^2+\alpha_3||u_t||_2^2 \\
        & \leq [\eta - C(\varepsilon)]E_1'(t) + [\eta - C(1+\frac{1}{2\varepsilon})]E_2'(t) \\
        &\qquad +[\varepsilon(2C_p+C)-\alpha_4] E_1(t) + (\eta_1+\alpha+\alpha_3)||u_t^2||_2^2\\
        &\qquad \qquad + C {(-E_1'(t))}^{\frac{(\gamma -1)}{\gamma}}  +C{(-E_1'(t))}^{\frac{(\gamma -2)}{\gamma}}.   
\end{split}
\end{equation*}

In above inequalities, we made use of  \eqref{d2} and the following side calculations. Taking $\delta = \min \{\alpha_1, \alpha_2, \alpha_3\}$ one has
\begin{equation*}
\begin{split}
&-\alpha_1||v_t^2||_2^2 -\alpha_2||\nabla u||_2^2 -\alpha_2||\nabla v||_2^2 - \alpha_3||u_t^2||_2^2\\
&\quad \leq -\delta \left[ ||v_t^2||_2^2 + \nabla u||_2^2 + ||\nabla v||_2^2 +||u_t^2||_2^2\right]\\
&\qquad \leq -\delta m E_1(t) = -\alpha_4 E_1(t).
\end{split}
\end{equation*}

Now, we choose $\varepsilon$ small so that 
$$ \varepsilon(2C_p+C)-\alpha_4 = -c_0 < 0,$$
and this implies that

\begin{equation*}
\begin{split}
L'(t) &\leq [\eta - \tilde{C}]E_1'(t) + [\eta - \tilde{\tilde{C}}]E_2'(t) -c_0 E_1(t) \\
       & \quad C||u_t^2||_2^2 + C {(-E_1'(t))}^{\frac{(\gamma -1)}{\gamma}}  +C{(-E_1'(t))}^{\frac{(\gamma -2)}{\gamma}}\\
       & \qquad + C(-E_1'(t))^{1/2} + C(-E_1'(t))\\
       & = [\eta - \tilde{C}-C]E_1'(t) + [\eta - \tilde{\tilde{C}}]E_2'(t) -c_0 E_1(t) \\
       & \quad + C(-E_1'(t))^{1/2} + C {(-E_1'(t))}^{\frac{(\gamma -1)}{\gamma}}  +C{(-E_1'(t))}^{\frac{(\gamma -2)}{\gamma}}.
\end{split}
\end{equation*}

We now choose $\eta$ so large that $\eta - \tilde{C}-C > 0$ and $\eta - \tilde{\tilde{C}} > 0$, so that  $L \geq 0$. Consequently, we arrive at
\begin{equation}\label{eq:derivativeL}
L'(t) \leq -c_0 E_1(t) + C(-E_1'(t))^{1/2} + C {(-E_1'(t))}^{\frac{(\gamma -1)}{\gamma}}  +C{(-E_1'(t))}^{\frac{(\gamma -2)}{\gamma}},
\end{equation}
where we remind that $\gamma > 2,$ if $d = 1, 2$ and $2 < \gamma \leq \frac{2d}{d-2}$, if $d \geq 3$.

For  $r \geq 1$, we multiply \eqref{eq:derivativeL} by $E_1^r$ and use Young's inequality to obtain for $\varepsilon_1 > 0$,
\begin{equation*}
\begin{split}
 E_1^r(t) L'(t) \leq -c_0 E_1^{r+1}(t) + \varepsilon_1 E_1^{2r}(t) + \varepsilon_1 E_1^{\gamma r}(t) + \varepsilon_1 E_1^{\frac{\gamma r}{2}} - \frac{C}{\varepsilon_1} E_1'(t).
\end{split}
\end{equation*}
This yields 

\begin{equation}\label{eq4-10}
\begin{split}
&E_1^r(t) L'(t) + \frac{C}{\varepsilon_1} E_1'(t) \\
 \quad \quad &\leq - \Big [ c_0 - \varepsilon_1 E_1^{r-1}(t) - \varepsilon_1 E_1^{r(\gamma -1)-1}(t) - \varepsilon_1 E_1^{r(\frac{\gamma}{2}-1)-1}(t) \Big] E_1^{r+1}(t).
\end{split}
\end{equation}
As $r \geq 1$ and $\gamma > 2,$ clearly $r(\gamma-1)-1> 0$. We need to choose $r \geq 1$ such that $r \geq \frac{2}{\gamma - 2}$. At this point, we distinguish two situations

\begin{itemize}

\item If $d = 1, 2$ we take $\gamma  = 4$; hence, we get $r=1$. 

\item If $d \geq 3$, we take $\gamma = \frac{2d}{d-2}$; hence $r \geq \frac{d-2}{2}$.

\end{itemize}

Consequently, \eqref{eq4-10} becomes

\begin{equation*}
E_1^r(t) + \frac{C}{\varepsilon_1} E_1'(t) \leq - \Big [c_0 - \varepsilon_1 E_1^{r-1}(0) - \varepsilon_1 E_1^{r(\gamma -1)-1}(0) -\varepsilon_1 E_1^{r(\frac{\gamma}{2}-1)}(0) \Big ] E_1^{r+1}(t).
\end{equation*}
We, then, choose $\varepsilon_1$ so small that 

 $$ A := c_0 - \varepsilon_1 E_1^{r-1}(0) - \varepsilon_1 E_1^{r(\gamma -1)-1}(0) -\varepsilon_1 E_1^{r(\frac{\gamma}{2}-1)}(0)  > 0.$$
Therefore, \eqref{eq4-10} gives
$$ CE_1'(t) + E_1(t)^r L'(t) \leq - A E_1^{r+1}(t).$$

Now, observe that 
\begin{equation*} 
 \begin{split}
 \left( E_1^r(t) L(t) + C E_1(t)\right)' &= rE_1^{r-1}(t)E_1'(t)L(t) + E_1^r(t)L'(t) + C E_1'(t)  \\
 & \quad \leq E_1^r(t)L'(t) + CE_1'(t)  \leq - A E_1^{r+1}(t),
 \end{split}
 \end{equation*}
and this implies that
\begin{equation}\label{eq-4.10}
E_1^{r+1}(t) \leq -\frac{1}{A} \Big[ E_1^r(t)L(t) + CE_1(t) \Big]'.
\end{equation}
Integration of \eqref{eq-4.10} yields
$$ \int_0^t E_1^{r+1}(s) \, ds \leq -\frac{1}{A} \int_0^t [E_1^r L + C E_1]'(s) \, ds.$$
Note that
$$ \int_0^t E_1^{r+1}(t) \, ds \leq \int_0^t E_1^{r+1}(s) \, ds, \quad \text{for} \quad s \in [0, t],$$
which implies
\begin{equation*}
\begin{split}
E_1^{r+1}(t) t \leq \int_0^t E_1^{r+1}(s) \, ds &\leq -\frac{1}{A}[(E_1^r L + C E_1)(t) - (E_1^r L + C E_1)(0)] \\
&\leq \frac{1}{A} \left(E_1^r L + C E_1\right)(0).
\end{split}
\end{equation*}
Hence,
$$ E_1^{r+1}(t) \leq \frac{C}{t}.$$
This implies \eqref{eq:energydecay} and completes the proof.

\end{proof}

\def\ocirc#1{\ifmmode\setbox0=\hbox{$#1$}\dimen0=\ht0 \advance\dimen0
  by1pt\rlap{\hbox to\wd0{\hss\raise\dimen0
  \hbox{\hskip.2em$\scriptscriptstyle\circ$}\hss}}#1\else {\accent"17 #1}\fi}

\noindent \textbf{Authors' contributions}\\
All the authors participated equally in the paper.\\

\noindent \textbf{Declaration of competing interest}\\
There is no conflict of Interest.\\

\noindent \textbf{Data availability}\\
There is no Data available.

\end{document}